\documentclass[oneside]{amsart}

\usepackage[top=.8in,bottom=1in,left=1in,right=1in,footskip=.5in]{geometry}
\usepackage{microtype}
\usepackage{mathtools}
\usepackage{hyperref}
\usepackage[dvipsnames]{xcolor}
\usepackage{tikz}
\usetikzlibrary{arrows}
\usepackage{lpic}
\usepackage{caption}
\usepackage{footnote}

\pagestyle{plain}

\title{Algebraic $k$-systems of curves}
\author{Charles Daly, Jonah Gaster, Max Lahn, Aisha Mechery, Simran Nayak}

\date{February 13, 2020}

\address{Department of Mathematics, University of Maryland \\}
\email{cdaly69@math.umd.edu\\}

\address{Department of Mathematics, University of Wisconsin-Milwaukee \\}
\email{gaster@uwm.edu\\}

\address{Department of Mathematics, University of Michigan \\}
\email{maxlahn@umich.edu\\}

\address{Department of Mathematics, Bryn Mawr College \\}
\email{aisha.mechery@gmail.com\\}

\address{Department of Applied Mathematics, Brown University \\}
\email{simran\_nayak@brown.edu\\}

\theoremstyle{plain}
\newtheorem{theorem}{Theorem}
\newtheorem{proposition}[theorem]{Proposition}
\newtheorem{corollary}[theorem]{Corollary}

\newtheorem*{theorem*}{Theorem}
\newtheorem*{proposition*}{Proposition}
\newtheorem*{corollary*}{Corollary}
\newtheorem*{axiom*}{Axiom}
\newtheorem{lemma}[theorem]{Lemma}
\newtheorem*{lemma*}{Lemma}

\theoremstyle{definition}

\newtheorem{definition/proposition}[theorem]{Definition/Proposition}
\newtheorem*{definition*}{Definition}
\newtheorem*{definitions*}{Definitions}
\newtheorem*{example*}{Example}
\newtheorem*{examples*}{Examples}
\newtheorem*{exercise*}{Exercise}
\newtheorem*{exercises*}{Exercises}
\newtheorem*{definition/proposition*}{Definition/Proposition}

\newtheorem*{problem*}{Problem}

\theoremstyle{remark}
\newtheorem{remark}[theorem]{Remark}

\newtheorem*{remark*}{Remark}
\newtheorem*{remarks*}{Remarks}
\newtheorem*{note*}{Note}
\newtheorem*{observation*}{Observation}
\newtheorem*{recall*}{Recall}
\newtheorem*{notation*}{Notation}
\newtheorem*{terminology*}{Terminology}
\newtheorem*{fact*}{Fact}
\newtheorem*{conjecture*}{Conjecture}
\newtheorem{construction}{Construction}

\newcommand{\Z}{\mathbb{Z}}
\newcommand{\Hom}{\mathrm{H}}
\newcommand{\Sp}{\mathrm{Sp}}
\newcommand{\SL}{\mathrm{SL}}

\newcommand{\vect}{\mathbf}

\begin{document}

\begin{abstract}
A collection $ \Delta $ of simple closed curves on an orientable surface is an algebraic $ k $-system if the algebraic intersection number $ \langle \alpha, \beta \rangle$ is equal to $k $ in absolute value for every $ \alpha , \beta \in \Delta $. 
Generalizing a theorem of \cite{MRT14} we compute that the maximum size of an algebraic $k$-system of curves on a surface of genus $g$ is $2g+1$ when $g\ge 3$ or $k$ is odd, and $2g$ otherwise. 
To illustrate the tightness in our assumptions, we present a construction of curves pairwise geometrically intersecting twice whose size grows as $g^2$.
\end{abstract}

\maketitle

\section{Introduction} \label{sec:intro}

Questions about the combinatorial properties of collections of curves on surfaces with specified pairwise intersection data have been considered by various authors.
In \cite{JMM96}, the first explicit bounds were obtained for the maximum size of any curve collection on a surface of finite-type such that any pair has geometric intersection number at most $ k $. 
In the remarkably delicate case $ k = 1 $, increasingly precise bounds on these maximum sizes are given in \cite{MRT14}, \cite{Prz15}, \cite{ABG17}, and \cite{Gre18a}, but aside from several low-complexity examples, exact values remain unknown.

In \cite{MRT14} it is shown that the maximum size of a collection of simple curves on a connected oriented surface $ S $, of genus $g$, such that every pair of distinct curves intersects an odd number of times is $ 2 g + 1 $.
The authors' proof exploits the symplectic pairing on $ \Hom_1( S ; \Z / 2 \Z ) $ induced by the algebraic intersection number. 
Our main theorem is an extension of this result. 
The proof is slightly different than that of \cite{MRT14}, using the symplectic pairing of vectors to apply induction on $g$. 
This allows a bit more information.  
We say that a subset $\Lambda \subset \Z^{2g}$ is a \emph{symplectic $k$-system} if $|\langle \vect{v},\vect{w} \rangle|=k$ for all distinct $\vect v,\vect w\in\Lambda$.

\begin{theorem} \label{thm:curves}
Let $k \equiv 2^{ m - 1 } \pmod{ 2^{ m } } $.
A symplectic $k$-system has size at most $2g+1$, and equality is realized. 
When $g\ge 3$ or $m=1$, there exist primitive symplectic $k$-systems of size $2g+1$. 
When $m>1$ and $g\le 2$, a primitive symplectic $k$-system has size at most $2g$, and equality is realized.
\end{theorem}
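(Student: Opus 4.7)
The plan is to establish the upper bound $|\Lambda|\le 2g+1$ via a Pfaffian argument, then produce constructions realizing the equality, and finally analyze the primitive regime separately.

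For the bound, form the Gram matrix $G=(\langle v_i,v_j\rangle)_{ij}$ of $\Lambda=\{v_1,\ldots,v_N\}$: an $N\times N$ skew-symmetric integer matrix with zero diagonal and off-diagonal entries $\pm k$. Writing $G=M^\top J M$ where $M$ is the $2g\times N$ matrix with columns the $v_i$ and $J$ is the standard symplectic form, one has $\mathrm{rank}(G)\le\mathrm{rank}(M)\le 2g$. Setting $G'=G/k$, a skew-symmetric integer matrix with $\pm 1$ off-diagonal entries and zero diagonal, for even $N$ the Pfaffian $\mathrm{Pf}(G')$ is a signed sum of $(N-1)!!$ terms, each a product of $\pm 1$'s. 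Since $(N-1)!!=1\cdot 3\cdots(N-1)$ is an odd integer, $\mathrm{Pf}(G')$ is odd and hence nonzero, so $G'$ has full rank $N$; combined with the rank bound this forces $N\le 2g$ for even $N$. For odd $N$, apply the argument to any $(N-1)$-subcollection to conclude $N-1\le 2g$, i.e., $N\le 2g+1$.

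For realizability of size $2g+1$, I would build examples by induction on $g$: the base case $\{e_1,\,kf_1,\,e_1+kf_1\}$ in $\Z^2$ works, and given a size-$(2g-1)$ system $\{w_1,\ldots,w_{2g-1}\}$ in the symplectic complement $\langle e_2,f_2,\ldots,e_g,f_g\rangle$, the extended family $\{e_1,\,kf_1,\,e_1+kf_1+w_1,\ldots,e_1+kf_1+w_{2g-1}\}$ realizes $2g+1$ in $\Z^{2g}$, since the pairings among the last $2g-1$ vectors reduce to those of the $w_i$. For primitive realization when $m=1$ (odd $k$), the non-primitive vector $kf_1$ can be replaced by $2e_1+kf_1$ (primitive and shifting pairings by multiples of $k$). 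For $g\ge 3$ and $k$ even, the additional symplectic planes allow adjustments such as $kf_1\to kf_1+e_3$ together with small modifications of the $w_i$ to retain primitivity.

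The primitive upper bound $|\Lambda|\le 2g$ when $m>1$ and $g\le 2$ is the main technical obstacle: the Pfaffian argument still allows $N=2g+1$, so I would reduce modulo $2$. Each primitive $v$ gives a nonzero $\bar v\in(\Z/2)^{2g}$, and since $k$ is even all pairings vanish mod $2$; thus the $\bar v_i$ span an isotropic subspace of dimension $\le g$, giving at most $2^g-1$ nonzero mod-$2$ classes. Within a fiber (vectors sharing a mod-$2$ class), writing $v'=v+2u$ and expanding $\langle v,v'\rangle=2\langle v,u\rangle=\pm k$, then comparing with a third vector in the same fiber, yields parity constraints that bound the fiber size (cleanly when $k\equiv 2\pmod 4$; for larger $m$ one must iterate modulo higher powers of $2$ or supplement with the kernel analysis of $G'$, whose generator has entries $\pm\mathrm{Pf}$ of $4\times 4$ minors, hence in $\{\pm 1,\pm 3\}$). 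For $g=1$ this immediately forces $|\Lambda|\le 2$; for $g=2$ the only combinatorially feasible distribution of $5$ vectors across $\le 3$ nonzero classes is $(2,2,1)$, and a direct expansion of the pairing relations among explicit representatives excludes this configuration. Realizability of size $2g$ in these cases follows from explicit examples, such as $\{e_1,\,2f_1+e_2,\,2f_1+e_2-2f_2,\,e_1+2f_1\}$ in $\Z^4$ for $k=2$.
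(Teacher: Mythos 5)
Your Pfaffian argument for the upper bound is correct and is a genuinely different route from the paper's: the paper either runs a double induction on genus and power using symplectic orbit representatives (Proposition~\ref{prop:vectors}), or shows that any nontrivial linear dependence among the vectors must have all coefficients nonzero (Proposition~\ref{prop:Josh}). Your observation that the rescaled Gram matrix $G'=G/k$ has Pfaffian equal to a signed sum of $(N-1)!!$ terms each $\pm1$, hence odd, hence nonzero, so that $N=\mathrm{rank}(G')\le\mathrm{rank}(M)\le 2g$ for $N$ even, is clean and works for every $k\ne0$ with no reference to $m$. Note, though, that it uses the exact equality $|\langle \vect v,\vect w\rangle|=k$ and so does not recover the congruence version the paper proves. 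Your inductive construction realizing size $2g+1$, and the replacement of $k\vect f_1$ by $2\vect e_1+k\vect f_1$ to obtain primitivity when $k$ is odd, both check out.

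The two remaining claims are where the real work lies, and both are gaps as written. First, for $g\ge3$ and $m>1$ you assert that ``adjustments such as $k\vect f_1\to k\vect f_1+\vect e_3$ together with small modifications of the $\vect w_i$'' yield a primitive system of size $2g+1$. This does not go through: if the replacement is $k\vect f_1+\vect x$ with $\vect x$ in the complement of the first symplectic plane, its pairing with $\vect e_1+k\vect f_1+\vect w_j$ is $-k+\langle\vect x,\vect w_j\rangle$, so you need $\langle\vect x,\vect w_j\rangle\in\{0,2k\}$ for every $j$; since any $2g-2$ of the $\vect w_j$ are linearly independent (by your own rank argument) they span the complement, so $\vect x=\vect 0$ unless some of these pairings equal $2k$, and no choice of modification is exhibited. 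The paper's Construction~\ref{constr:second} handles this with an explicit list of seven primitive vectors in $\Z^6$ followed by a three-for-one extension step; some such base case in genus three is unavoidable and your sketch does not supply it.

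Second, the primitive bound $|\Lambda|\le 2g$ for $m>1$, $g\le2$ is established by your argument only when $m=2$. The mod-$2$ reduction and the isotropy of the span are fine, and the identity $\langle\vect v+2\vect a,\vect v+2\vect b\rangle=(\pm k)+(\mp k)+4\langle\vect a,\vect b\rangle$ does force fibers of size at most $2$ when $4\nmid k$; but when $4\mid k$ it only shows $\langle\vect a,\vect b\rangle$ is an odd multiple of $k/4$, which is consistent with fibers of size $3$, so for $m\ge3$ distributions such as $(3,1,1)$ are not excluded and the reduction to $(2,2,1)$ fails. Moreover the exclusion of $(2,2,1)$ is itself only asserted; that exclusion is precisely the content of the paper's Lemma~\ref{lem:low genus}, which requires a careful case analysis via projection to the second symplectic plane. (The $g=1$ case is easily repaired for all $m$: normalizing $\vect v_1=\vect e_1$, any two further primitive elements have second coordinate $\pm k$ and odd first coordinate, so they pair to an even multiple of $k$.) Your explicit size-$2g$ examples in the primitive $g\le2$ case are correct.
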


Since the geometric and algebraic intersection numbers of any pair of curves on $ S $ have the same parity, \cite[Thm~1.4]{MRT14} is the case $ m = 1 $ above. 
Naturally, curves determine primitive homology classes via the isomorphism $\Hom_1( S ; \Z  )  \cong  \Z ^{ 2 g } $.

\begin{corollary} \label{cor:systems}
The maximum size of an algebraic $k$-system of simple curves on $ S $ is $ 2 g + 1 $ when $g\ge 3$. For $g\le 2$, the latter is $2g$ when $k$ is even and $2g+1$ when $k$ is odd.
\end{corollary}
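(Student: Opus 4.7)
The plan is to deduce Corollary~\ref{cor:systems} directly from Theorem~\ref{thm:curves} using the standard dictionary between isotopy classes of non-separating simple closed curves on $S$ and primitive classes in $\Hom_1(S;\Z) \cong \Z^{2g}$ (up to sign).

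For the upper bound, I would start with an algebraic $k$-system of curves $\Delta$ and pass to the set $\Lambda = \{[\gamma] : \gamma \in \Delta\}$ of homology classes. Because the algebraic intersection form depends only on homology and equals $\pm k \ne 0$ on each pair of distinct elements of $\Delta$, the assignment $\gamma \mapsto [\gamma]$ is injective on $\Delta$ and each class is nonzero. The homology class of a simple closed curve is either zero (when the curve is separating) or primitive (otherwise), so $\Lambda$ is a primitive symplectic $k$-system with $|\Lambda| = |\Delta|$. Theorem~\ref{thm:curves} then yields $|\Delta| \le 2g+1$ in general, and $|\Delta| \le 2g$ when $g \le 2$ and $k$ is even.

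For realizability, given a primitive symplectic $k$-system $\Lambda$ of the stated maximum size (whose existence is guaranteed by Theorem~\ref{thm:curves}), I would invoke the classical fact that every primitive class in $\Hom_1(S;\Z)$ is represented by some non-separating simple closed curve. Selecting one such representative for each element of $\Lambda$ produces a collection of simple closed curves whose pairwise algebraic intersection numbers equal $\pm k$, since this quantity is homological; this collection realizes the upper bound.

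Since the argument amounts to a translation between the topological and symplectic-linear settings, I do not anticipate a substantive obstacle. The only care required is in verifying the primitivity and injectivity conditions when passing between curves and homology classes; both rely on $k \ne 0$, and this would be the step I would flag as most prone to imprecision if stated carelessly.
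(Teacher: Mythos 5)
Your proposal is correct and follows essentially the same route as the paper: both directions are handled by the dictionary between curves and primitive classes in $\Hom_1(S;\Z)$, with the upper bound coming from Theorem~\ref{thm:curves} applied to the (primitive, injectively obtained) homology classes of the curves, and realizability coming from representing each class of an extremal primitive symplectic $k$-system by a simple closed curve as in \cite[Prop.~6.2]{FM12}. Your added care about injectivity and non-separating curves (both consequences of $k\ne 0$) is exactly the right point to make explicit.
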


On the other hand, to emphasize the importance of `algebraic' above we show:

\begin{proposition*} \label{prop:geom 2system}
There is a collection of simple closed curves on $S$ of size $\approx g^2$ that pairwise geometrically intersects twice. 
\end{proposition*}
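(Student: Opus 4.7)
The plan is to exhibit $\binom{g}{2}$ explicit simple closed curves on $S_g$ that pairwise intersect geometrically in exactly two points, yielding the claimed $\Theta(g^2)$ collection. Realize $S_g$ as the double $\Sigma \cup_\partial \Sigma$ of a $g$-holed planar surface $\Sigma$, with inner boundary circles $C_1, \ldots, C_g$; denote the two copies of $\Sigma$ in $S_g$ by $\Sigma^\pm$, and let $\sigma \colon S_g \to S_g$ be the orientation-preserving involution exchanging them and fixing $\partial \Sigma$ pointwise.

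For each pair $1 \le i < j \le g$, select an embedded arc $\alpha_{ij} \subset \Sigma^+$ from a point $p_i^{(j)} \in C_i$ to a point $p_j^{(i)} \in C_j$, chosen so that the $g-1$ points on each $C_i$ are pairwise distinct. Setting $\gamma_{ij} := \alpha_{ij} \cup \sigma(\alpha_{ij})$ produces a $\sigma$-invariant simple closed curve, transverse to $\partial\Sigma$ at its two boundary crossings, and distinct pairs give distinct homology classes $\pm b_i \pm b_j$ in a symplectic basis dual to $[C_i] = a_i$. For two such curves, the endpoint disjointness forces intersections to lie in the interiors of $\Sigma^\pm$, and by $\sigma$-symmetry,
\[
    |\gamma_{ij} \cap \gamma_{kl}| \;=\; 2\,|\alpha_{ij} \cap \alpha_{kl}|.
\]

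The remaining task is to construct the arcs $\{\alpha_{ij}\}$ in $\Sigma^+$ so that any two distinct arcs meet in exactly one interior point, i.e.\ a pseudoline arrangement of $\binom{g}{2}$ arcs in the $g$-holed disk, one per pair of boundary circles. I plan to realize this by routing every arc through a common hub disk $U \subset \Sigma^+$ and grouping the $g(g-1)$ endpoints on $\partial U$ into $g$ cyclic bundles (one per $C_i$), with within-bundle orderings chosen so that every pair of chords across $U$ is linked. Equivalently, this amounts to realizing a pairwise-crossing simple drawing of $K_g$ in the disk. The main obstacle is precisely this endpoint-ordering combinatorics; with it in hand, a bigon argument (any bigon between $\gamma_{ij}$ and $\gamma_{kl}$ would descend under $\sigma$ to a bigon between the $\alpha$-arcs, contradicting their single transverse crossing) confirms that $|\gamma_{ij} \cap \gamma_{kl}| = 2$ for all distinct pairs, and the count $\binom{g}{2} \asymp g^2$ yields the proposition.
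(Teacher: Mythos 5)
Your reduction to a planar arc system is sound as far as it goes, and it is close in spirit to the paper's construction (which also doubles a planar picture, namely a $g$-gon, and builds its curves out of arcs joining edge midpoints in the two copies). But the entire content of the proposition is the step you defer: producing $\binom{g}{2}$ arcs $\alpha_{ij}$ in the $g$-holed disk, one per pair of holes, such that every two of them cross exactly once. You name this ``the main obstacle'' and do not resolve it, so nothing has been proved. Worse, the hub-disk scheme you sketch cannot resolve it: if the $g(g-1)$ endpoints on $\partial U$ are grouped into $g$ contiguous bundles (which is forced if the portions of the arcs outside $U$ are to run disjointly to their respective circles), then whether two chords of $U$ are linked is determined by whether their bundle pairs interleave in the fixed cyclic order of the bundles. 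For, say, $\{1,2\}$ versus $\{3,4\}$ with bundles in the order $1,2,\dots,g$, the four endpoints do not alternate, so those two chords cross an even number of times no matter how you permute endpoints within bundles; arcs such as $\alpha_{12}$ and $\alpha_{34}$ therefore cannot be made to cross exactly once by this routing. A secondary issue: even granting one transverse crossing of arcs, your bigon argument is too quick --- a bigon between $\gamma_{ij}$ and $\gamma_{kl}$ has its two vertices interchanged by $\sigma$ and must meet $\partial\Sigma$, so it does not literally descend to a bigon between the arcs; indeed two arcs crossing once in a disk with no holes between them double to curves that are isotopic off of each other, so some use of the topology of the holes is unavoidable here.

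The paper sidesteps exactly this difficulty by not asking for all $\binom{g}{2}$ pairs. It doubles a $g$-gon to a sphere with $g$ marked points (then glues in one-holed tori), and takes only the quadrilateral curves $\alpha(i,j)=\beta(1,i,j,j+k-i+1)$, all of which pass through the first edge. Any two such curves meet at most four times, in fact fewer because they share the index $1$, hence at most twice by parity; and they cannot be disjoint, because each bounds a disk containing exactly $k$ marked points including the common point $v_1$, so disjointness would force them to cobound an annulus and coincide. This yields $(k-1)(g-k-1)\approx \tfrac14 g^2$ curves pairwise intersecting exactly twice. To salvage your approach you would need either an explicit pairwise-once-crossing arc family (a nontrivial combinatorial construction you have not supplied and which your routing cannot produce), or a device like the paper's common anchor that simultaneously caps the intersection number above and rules out disjointness.
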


While this paper was being composed, a short proof of the upper bound of $2g+1$ was found in conversation with Josh Greene. This amounts to a more direct generalization of the $m=1$ argument in \cite[Prop.~5.1]{MRT14}. 
We include both this short argument in Section~\ref{sec:simple proof} and our inductive proof in Section~\ref{sec:vectors}, which, while slightly longer, allows greater control in small genus.

\bigskip

\subsection*{Acknowledgements}
The authors thank Tarik Aougab, Moira Chas, and Bill Goldman for their enthusiasm and support.
We are grateful as well to Josh Greene for his help with an alternative proof of 
part of our main theorem (see Proposition~\ref{prop:Josh}). 
This material is based upon work supported by the National Science Foundation under Grant No.~DMS-1439786 while the authors were in residence at the Institute for Computational and Experimental Research in Mathematics in Providence, RI, during the \href{https://icerm.brown.edu/summerug/2018/}{\emph{Summer@ICERM 2018: Low Dimensional Topology and Geometry}} program.

\newpage
\section{Diagonal Orbits of Symplectic Groups} \label{sec:orbits}

In this section, we adapt the following result of Athreya and Konstantoulas from \cite{AK18} to obtain simple representatives for the orbits of the diagonal action of the symplectic group $ \Sp( 2 g ; \Z / 2^{ m } \Z ) $ on pairs of elements in $ \left( \Z / 2^{ m } \Z \right)^{ 2 g } $. For this section only, we assume that $ g \geq 2 $.

\begin{proposition}
\cite[Prop.~3.5]{AK18}\label{prop:orbits}
The orbits of the diagonal action of the integral symplectic group $ \Sp( 2 g ; \Z ) $ on pairs of primitive elements in $\textstyle \Z^{ 2 g } $ are in correspondence with ordered triples $ ( a , b , c )$ of natural numbers, where either
\begin{enumerate}

\item[(a)]

$ c = 0 $, in which case $ 0 < a < b $ and $ \gcd( a , b ) = 1 $;

\item[(b)]

$ c = 1 $, in which case $ a = 0 $; or

\item[(c)]

$ c > 1 $, in which case $ c $ divides $ b $, $ 0 < a < c $, and $ \gcd( a , c ) = 1 $.

\end{enumerate}
Moreover, $ (\vect{ e }_{ 1 } , a \vect{ e }_{ 1 } + b \vect{ e }_{ g + 1 } + c \vect{ e }_{ 2 g } ) $ is a representative for the orbit corresponding to $ ( a , b , c ) $.

\end{proposition}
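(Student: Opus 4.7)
The plan is to exploit transitivity of $\Sp(2g;\Z)$ on primitive vectors in $\Z^{2g}$, a standard fact proved via symplectic transvections. This reduces the classification to orbits of the stabilizer $\mathrm{Stab}(\vect{e}_1)$ acting on primitive $\vect{w} \in \Z^{2g}$. Two invariants present themselves: the symplectic pairing $b := \langle \vect{e}_1, \vect{w}\rangle$, and the content $c$ of the image of $\vect{w}$ in the symplectic lattice $\vect{e}_1^{\perp}/\Z\vect{e}_1 \cong \Z^{2g-2}$ (the projection of $\vect{w}$ into $\vect{e}_1^{\perp}$ is well-defined modulo $\Z\vect{e}_1$). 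One checks that $c \mid b$ when $c > 0$ and that both quantities are preserved by $\mathrm{Stab}(\vect{e}_1)$.

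For the reduction to canonical form, I would use the natural surjection $\mathrm{Stab}(\vect{e}_1) \to \Sp(\vect{e}_1^{\perp}/\Z\vect{e}_1) \cong \Sp(2g-2;\Z)$ to move the projection of $\vect{w}$ to a multiple of the class of $\vect{e}_{2g}$; this places $\vect{w}$ in the form $a\vect{e}_1 + b\vect{e}_{g+1} + c\vect{e}_{2g}$. To reduce $a$, the symplectic transvection $T_{\vect{e}_1, n}\colon \vect{v} \mapsto \vect{v} + n\langle \vect{e}_1, \vect{v}\rangle \vect{e}_1$ lies in $\mathrm{Stab}(\vect{e}_1)$ and shifts $a$ by multiples of $b$, yielding $0 \leq a < b$ in case (a). In case (c), combining $T_{\vect{e}_1, n}$ with further symplectic transvections along vectors that fix $\vect{e}_1$ but pair nontrivially with $\vect{e}_{2g}$ produces shifts of $a$ by multiples of $c$, giving $0 \leq a < c$. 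In case (b), where $c = 1$, these moves eliminate $a$ entirely. Primitivity of $\vect{w}$ then forces $\gcd(a,b)=1$ in case (a) and $\gcd(a,c)=1$ in case (c).

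The main obstacle, which I expect to consume most of the work, is completeness: verifying that distinct normalized triples $(a,b,c)$ correspond to distinct orbits. This requires an inventory of every way the pointwise stabilizer of $(\vect{e}_1, b\vect{e}_{g+1} + c\vect{e}_{2g})$ in $\Sp(2g;\Z)$ can shift the $\vect{e}_1$-coefficient, and amounts to showing that the induced action on this coefficient is precisely translation by $c\Z$ (respectively $b\Z$ in case (a)). The three-case structure of the proposition reflects genuine degenerations: $c=0$ when $\vect{w}$ lies in the symplectic plane $\langle \vect{e}_1,\vect{e}_{g+1}\rangle$, $c=1$ when $\vect{e}_1$ and the image of $\vect{w}$ generate a unimodular piece (collapsing the $a$-invariant), and $c>1$ in the generic situation. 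Once the induced action is identified, the claimed bijection with triples $(a,b,c)$ follows.
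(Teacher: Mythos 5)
A preliminary remark: the paper does not prove Proposition~\ref{prop:orbits} at all --- it is imported verbatim from \cite[Prop.~3.5]{AK18} and used as a black box (only through Corollary~\ref{cor:pairs}, which needs just the existence of the normal form, not the full classification). So there is no internal proof to compare yours against, and your outline must be judged on its own terms. The overall strategy is the standard one (transitivity of $\Sp(2g;\Z)$ on primitive vectors, reduction to the stabilizer of $\vect{e}_1$, normalization by transvections), but there are two genuine gaps.

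First, your invariant $c$ is not well defined as stated. There is no canonical projection of $\vect{w}$ into $\vect{e}_1^{\perp}$: one must subtract $b f$ for some $f$ with $\langle \vect{e}_1, f\rangle = 1$, and two such choices of $f$ differ by an arbitrary element of $\vect{e}_1^{\perp}$, so the class of the projection in $\vect{e}_1^{\perp}/\Z\vect{e}_1$ is determined only modulo $b$ times that quotient lattice --- not modulo $\Z\vect{e}_1$ as you claim --- and its content is therefore not an orbit invariant. Concretely, for $\vect{w} = 3\vect{e}_{g+1} + 2\vect{e}_{2g}$ the choice $f=\vect{e}_{g+1}$ gives a projection of content $2$, while $f=\vect{e}_{g+1}+\vect{e}_{2g}$ gives $-\vect{e}_{2g}$, of content $1$; and indeed no admissible triple has $b=3$ and $c=2$. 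Your parenthetical assertion that ``one checks that $c\mid b$'' fails for the same reason. The divisibility $c \mid b$ is a property one must \emph{arrange} during normalization (and it is exactly what makes the transvection bookkeeping in your case (c) go through); the honest invariant is closer to $\gcd(b,c)$ together with $a$ modulo it, and extracting this requires running the reduction carefully rather than positing the invariant up front. Second, the completeness half --- that distinct admissible triples label distinct orbits --- is the substantive content of the proposition, and you explicitly describe what it would require without carrying it out; the normalization argument alone only shows that every orbit meets the list. As written, the proposal is a plausible plan rather than a proof.
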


\begin{corollary} \label{cor:pairs}

For any pair $ \vect{ u } , \vect{ v } \in \left( \Z / 2^{ m} \Z  \right)^{ 2 g } $ with $\vect{u}$ primitive, there exists $ T \in \Sp( 2 g ; \Z / 2^{ m } \Z ) $ taking $ \vect{ u } $ to $ \vect{ e }_{ 1 } $ and $ \vect{ v } $ to $ y \vect{ e }_{ 1 } + \langle  \vect{ u } , \vect{ v } \rangle \vect{ e }_{ g + 1 } + z \vect{ e }_{ 2 g } $ for some $ y $ and $ z $ in $ \Z / 2^{ m } \Z $.

\begin{proof}

Write $ \vect{ v } = \lambda \vect{ w } $, where $ \vect{ w } $ is primitive, and lift $ \vect{ u } $ and $ \vect{ w } $ to primitive elements in $ \Z^{ 2 g } $. 
By Proposition~\ref{prop:orbits}, there exists a transformation $ \tilde{ T } $ in $ \Sp( 2 g ; \Z ) $ taking $ \tilde{ \vect{ u } } $ to $ \vect{ e }_{ 1 } $ and $ \tilde{ \vect{ w } } $ to $ a \vect{ e }_{ 1 } + b \vect{ e }_{ g + 1 } + c \vect{ e }_{ 2 g } $ for some integers $ a $, $ b $, and $ c $. Note that
\[
b \equiv \langle \vect{ e }_{ 1 } , a \vect{ e }_{ 1 } + b \vect{ e }_{ g + 1 } + c \vect{ e }_{ 2 g } \rangle \equiv \langle \tilde{ T } \tilde{ \vect{ u } } , \tilde{ T } \tilde{ \vect{ w } } \rangle \equiv \langle \tilde{ \vect{ u } } , \tilde{ \vect{ w } } \rangle \equiv \langle \vect{ u } , \vect{ w } \rangle \pmod{ 2^{ m } } .
\]
Therefore, the transformation $ T $ induced on $ \left( \Z / 2^{ m } \Z \right)^{ 2 g } $ by $ \tilde{ T } $ takes $ \vect{ u } $ to $ \vect{ e }_{ 1 } $ and $ \vect{ v } $ to
\[
T \vect{ v } = \lambda ( a \vect{ e }_{ 1 } + b \vect{ e }_{ g + 1 } + c \vect{ e }_{ 2 g } ) = \lambda a \vect{ e }_{ 1 } + \lambda \langle \vect{ u } , \vect{ w } \rangle \vect{ e }_{ g + 1 } + \lambda c \vect{ e }_{ 2 g } = y \vect{ e }_{ 1 } + \langle \vect{ u } , \vect{ v } \rangle \vect{ e }_{ g + 1 } + z \vect{ e }_{ 2 g } ,
\]
where $ y \coloneqq \lambda a $ and $ z \coloneqq \lambda c $.
\end{proof}

\end{corollary}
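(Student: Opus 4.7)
The plan is to invoke Proposition~\ref{prop:orbits} after lifting the modular pair $(\vect{u},\vect{v})$ to a suitable primitive integer pair, then to push the resulting integral normal form back down modulo $2^m$. The first step is to factor $\vect{v} = \lambda \vect{w}$ with $\vect{w} \in (\Z/2^m\Z)^{2g}$ primitive, by extracting the largest power of $2$ dividing all coordinates of $\vect{v}$. Next I would lift $\vect{u}$ and $\vect{w}$ to primitive vectors $\tilde{\vect{u}}, \tilde{\vect{w}} \in \Z^{2g}$: primitivity modulo $2^m$ forces some coordinate to be odd, and a small perturbation of the remaining coordinates by multiples of $2^m$ can eliminate any unwanted common prime factor while preserving the reduction mod $2^m$.

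With lifts in hand, Proposition~\ref{prop:orbits} furnishes a transformation $\tilde{T} \in \Sp(2g;\Z)$ sending $\tilde{\vect{u}} \mapsto \vect{e}_1$ and $\tilde{\vect{w}} \mapsto a\vect{e}_1 + b\vect{e}_{g+1} + c\vect{e}_{2g}$. Because $\tilde{T}$ preserves the symplectic form, one has $b = \langle \vect{e}_1, \tilde{T}\tilde{\vect{w}}\rangle = \langle \tilde{\vect{u}}, \tilde{\vect{w}}\rangle$, which reduces mod $2^m$ to $\langle \vect{u}, \vect{w}\rangle$. The matrix $\tilde{T}$ descends to $T \in \Sp(2g;\Z/2^m\Z)$ under the ring homomorphism $\Z \to \Z/2^m\Z$, and applying $T$ to $\vect{v} = \lambda \vect{w}$ gives
\[
T\vect{v} = \lambda a\,\vect{e}_1 + \lambda b\,\vect{e}_{g+1} + \lambda c\,\vect{e}_{2g}.
\]
Bilinearity of the pairing yields $\lambda \langle \vect{u}, \vect{w}\rangle = \langle \vect{u}, \lambda \vect{w}\rangle = \langle \vect{u}, \vect{v}\rangle$, so setting $y := \lambda a$ and $z := \lambda c$ produces the desired normal form.

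The step that demands the most care is verifying that a primitive class in $(\Z/2^m\Z)^{2g}$ genuinely admits a primitive integer lift, since Proposition~\ref{prop:orbits} is stated only for primitive integer vectors. Beyond this elementary number-theoretic check and the routine observation that reduction mod $2^m$ is a symplectic ring map, everything else is bookkeeping: the factorization $\vect{v} = \lambda \vect{w}$ reduces the problem from arbitrary $\vect{v}$ to the primitive case, and symplectic invariance automatically identifies the middle coefficient $b$ with the intersection number one wants to recognize.
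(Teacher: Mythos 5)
Your proposal is correct and follows essentially the same route as the paper's proof: factor $\vect{v} = \lambda \vect{w}$ with $\vect{w}$ primitive, lift $\vect{u}$ and $\vect{w}$ to primitive integer vectors, apply Proposition~\ref{prop:orbits} to obtain the normal form, identify $b \equiv \langle \vect{u}, \vect{w} \rangle \pmod{2^m}$ via symplectic invariance, and scale by $\lambda$. Your extra remark justifying the existence of a primitive integer lift is a welcome detail the paper leaves implicit.
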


\bigskip
\section{Symplectic Systems} \label{sec:vectors}

We will call $ \Lambda \subset \left( \Z / 2^{ m } \Z \right)^{ 2 g } $ a \emph{symplectic system} if for any pair of distinct elements $ \vect{ u }, \vect{ v } \in \Lambda$ we have 
\[
\langle \vect{ u } , \vect{ v } \rangle \mathbin{ \overset{ \mathrm{ def } }{ \equiv } } \sum_{ i = 1 }^{ g }{ \det{ \begin{pmatrix} u_{ i } & v_{ i } \\ u_{ g + i } & v_{ g + i } \end{pmatrix} } } \equiv 2^{ m - 1 } \pmod{ 2^{ m } }~.
\]
In this case, we will call the natural numbers $ g $ and $ m $ the \emph{genus} and \emph{power} of $ \Lambda $, respectively. 

\begin{proposition} \label{prop:vectors}

A symplectic system in $ \left( \Z / 2^{ m } \Z \right)^{ 2 g } $ has size at most $ 2 g + 1 $.

\end{proposition}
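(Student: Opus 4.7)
The plan is to induct on $g$, using Corollary~\ref{cor:pairs} to reduce a symplectic system at genus $g$ to one at genus $g-1$. For the base case $g=1$, I would normalize a primitive element of $\Lambda$ to $\vect{e}_1$; the condition forces the second coordinate of every other $\vect{v}\in\Lambda$ to be $\equiv 2^{m-1}\pmod{2^m}$, after which the first coordinates must pairwise differ by an odd number, leaving at most two admissible values in $\Z/2^m\Z$. This gives $|\Lambda|\le 3$. Should $\Lambda$ contain no primitive element (which can occur only when $m\ge 3$), then $\Lambda\subset 2(\Z/2^m\Z)^{2g}$, and dividing by $2$ identifies it with a symplectic system at smaller power, handled by a secondary induction on $m$.

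For the inductive step $g\ge 2$, I would pick a primitive $\vect{u}_0\in\Lambda$ and use Corollary~\ref{cor:pairs} to assume $\vect{u}_0=\vect{e}_1$; each $\vect{v}\in\Lambda':=\Lambda\setminus\{\vect{u}_0\}$ then has $v_{g+1}\equiv 2^{m-1}$. Let $\pi\colon(\Z/2^m\Z)^{2g}\to(\Z/2^m\Z)^{2(g-1)}$ be the projection forgetting the coordinates with indices $1$ and $g+1$. A direct calculation gives
\[
\langle\pi(\vect{v}),\pi(\vect{w})\rangle \equiv 2^{m-1}(1+w_1-v_1)\pmod{2^m}
\]
for distinct $\vect{v},\vect{w}\in\Lambda'$. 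Partitioning $\Lambda'=\Lambda'_0\sqcup\Lambda'_1$ by the parity of $v_1$ and setting $A:=\pi(\Lambda'_0)$, $B:=\pi(\Lambda'_1)$, one finds that $A$ and $B$ are each symplectic systems in $(\Z/2^m\Z)^{2(g-1)}$ (with $\pi$ injective on each class), while $\langle\vect{a},\vect{b}\rangle\equiv 0\pmod{2^m}$ whenever $\vect{a}\in A$ and $\vect{b}\in B$.

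The crux is to combine $A$ and $B$ when both are nonempty. Pick any $\vect{a}_0\in A$ and consider $C:=(A\setminus\{\vect{a}_0\})\cup(B+\vect{a}_0)$. Using the orthogonality $\langle A,B\rangle\equiv 0$, every pair of distinct elements of $C$ pairs to $2^{m-1}$; moreover, any coincidence $\vect{a}=\vect{b}+\vect{a}_0$ with $\vect{a}\in A\setminus\{\vect{a}_0\}$ and $\vect{b}\in B$ would, upon pairing both sides with $\vect{a}_0$, yield the contradiction $2^{m-1}\equiv 0\pmod{2^m}$, so the two pieces of $C$ remain disjoint. Hence $C$ is a symplectic system of size $|A|+|B|-1$ in the smaller space, and the inductive hypothesis gives $|A|+|B|\le 2g$, whence $|\Lambda|=1+|A|+|B|\le 2g+1$. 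If one of $A,B$ is empty, the inductive hypothesis applied directly to the nonempty class gives the stronger bound $|\Lambda|\le 2g$.

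The main obstacle will be carrying out the combination step carefully, in particular verifying that $C$ is genuinely a symplectic system of the claimed size and that no collisions are introduced by the translation. Once that check is complete, the induction closes cleanly; as a byproduct, the case distinction between one parity class and both classes occupied foreshadows the finer bounds $|\Lambda|\le 2g$ for small $g$ that appear in the main theorem.
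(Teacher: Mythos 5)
Your argument is correct, but it follows a genuinely different route from the paper's. The paper's inductive step (Lemma~\ref{lem:primitive}) removes \emph{two} elements at a time: it uses Corollary~\ref{cor:pairs} to normalize a pair $\vect{u},\vect{v}$ simultaneously and then builds a genus-$(g-1)$ system out of the remaining elements via a coordinate shift that absorbs the stray $z$-coordinate of $\vect{v}$, yielding $|\Lambda|\le 2+|\Lambda'|$. You instead remove a \emph{single} primitive element, which only requires transitivity of $\Sp(2g;\Z/2^m\Z)$ on primitive vectors, and then confront the fact that the projected set splits into two mutually orthogonal symplectic systems $A\perp B$ according to the parity of $v_1$. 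Your translation trick $C=(A\setminus\{\vect{a}_0\})\cup(B+\vect{a}_0)$, which merges two orthogonal symplectic systems into one at the cost of a single element, is the new ingredient; the three pairing cases, the disjointness argument via pairing with $\vect{a}_0$, and the computation $\langle\pi(\vect{v}),\pi(\vect{w})\rangle\equiv 2^{m-1}(1+w_1-v_1)$ all check out. Your handling of non-primitive systems by halving and dropping the power by two is exactly the paper's Lemma~\ref{lem:non-primitive}; just make sure that case is invoked at \emph{every} genus rather than only at $g=1$, since for $m\ge 3$ a system in $(\Z/2^m\Z)^{2g}$ can lack primitive elements for any $g$ (as written, your remark appears inside the base-case paragraph even though it is stated for general $2g$). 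It is worth noting that your parity decomposition and the orthogonality $A\perp B$ are precisely the mechanism the paper deploys separately in Lemma~\ref{lem:low genus} to get the sharper bound $2g$ in low genus; your observation that $|\Lambda|\le 2g$ when only one parity class is occupied recovers part of that refinement for free, which the paper's own proof of Proposition~\ref{prop:vectors} does not see directly.
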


We will prove this bound by induction first on the genus $ g \geq 0 $ and then on the power $ m \geq 1 $.
We separate the induction into two technical lemmas.
When a symplectic system contains primitive elements, we may obtain through Lemma~\ref{lem:primitive} a symplectic system of lower genus but comparable size. 
Conversely, when such a system contains no primitive elements, we obtain through Lemma~\ref{lem:non-primitive} a system of lower power and equal size.

\begin{lemma} \label{lem:primitive}

If $\Lambda$ is a symplectic system in $ \left( \Z / 2^{ m } \Z \right)^{ 2 g+2 } $ that contains at least one primitive element, there is a symplectic system $ \Lambda' $ in $ \left( \Z / 2^{ m } \Z \right)^{ 2 g } $ such that $ | \Lambda | \leq 2 + | \Lambda' | $.
\end{lemma}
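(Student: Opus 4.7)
The plan is to reduce to genus $g$ via projection onto a $2g$-dimensional symplectic subspace, sacrificing at most two elements of $\Lambda$. Since $\Lambda$ contains a primitive $\vect{u}$, by Corollary~\ref{cor:pairs} we may apply a symplectic transformation to assume $\vect{u}=\vect{e}_1$. The hypothesis $\langle\vect{e}_1,\vect{v}\rangle\equiv 2^{m-1}\pmod{2^m}$ then forces $v_{g+2}=2^{m-1}$ for every $\vect{v}\in\Lambda\setminus\{\vect{e}_1\}$.

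Next I would project onto the complementary symplectic subspace $V_0=\mathrm{span}(\vect{e}_j:j\neq 1,g+2)\cong(\Z/2^m\Z)^{2g}$ via $\pi(\vect{v})=\vect{v}-v_1\vect{e}_1-2^{m-1}\vect{e}_{g+2}$. A short bilinear expansion yields the key pairing identity
\[
\langle\vect{v},\vect{w}\rangle \equiv \langle\pi(\vect{v}),\pi(\vect{w})\rangle_{V_0} + 2^{m-1}(v_1-w_1) \pmod{2^m},
\]
so $\pi(\vect{v})$ and $\pi(\vect{w})$ pair to $2^{m-1}$ in $V_0$ exactly when $v_1\equiv w_1\pmod 2$. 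This motivates partitioning $\Lambda\setminus\{\vect{e}_1\}=\Lambda_0\sqcup\Lambda_1$ according to the parity of the first coordinate. One verifies routinely that $\pi$ is injective on each part (two elements of the same part with equal projection would differ by an even multiple of $\vect{e}_1$, giving pairing congruent to $0$, not $2^{m-1}$), and that $\pi(\Lambda_0)$ and $\pi(\Lambda_1)$ are each symplectic systems in $V_0$.

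If one of $\Lambda_0$ or $\Lambda_1$ has at most one element, the other projects injectively to a symplectic system of size at least $|\Lambda|-2$ in $(\Z/2^m\Z)^{2g}$, and we take $\Lambda'$ to be its image. The main obstacle is the mixed case, where both parts have at least two elements. Here the same identity shows that $\langle\pi(\vect{v}),\pi(\vect{w})\rangle\equiv 0\pmod{2^m}$ whenever $\vect{v}\in\Lambda_0$ and $\vect{w}\in\Lambda_1$, so $\pi(\Lambda_1)$ lies in the symplectic orthogonal of $\pi(\Lambda_0)$ inside $V_0$. My plan in this case is to recover $\Lambda'$ by translating one part: find $\vect{f}\in V_0$ so that $\Lambda':=\pi(\Lambda_0)\cup(\pi(\Lambda_1)+\vect{f})$ is a symplectic system, which reduces to the affine conditions $\langle\pi(\vect{v}),\vect{f}\rangle\equiv 2^{m-1}$ for $\vect{v}\in\Lambda_0$ and $\langle\pi(\vect{w}),\vect{f}\rangle$ constant on $\Lambda_1$. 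Exploiting the orthogonality above to supply enough codimension — and, if necessary, discarding a single element from one of the parts to eliminate a degeneracy — should produce such an $\vect{f}$ and yield $|\Lambda'|\geq|\Lambda|-2$, completing the reduction.
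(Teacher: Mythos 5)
Your setup is sound and matches the paper's up to the point where the real difficulty begins: normalizing $\vect{u}=\vect{e}_1$, extracting $v_{g+2}\equiv 2^{m-1}$, the pairing identity $\langle\vect{v},\vect{w}\rangle\equiv\langle\pi(\vect{v}),\pi(\vect{w})\rangle+2^{m-1}(v_1-w_1)$, and the injectivity of $\pi$ on each parity class are all correct, and they do dispose of the case where one parity class has at most one element (including $g=0$). The gap is the mixed case, which is precisely where the content of the lemma lies. You need $\vect{f}\in V_0$ with $\langle\pi(\vect{v}),\vect{f}\rangle\equiv 2^{m-1}$ for all $\vect{v}\in\Lambda_0$ and $\langle\pi(\vect{w}),\vect{f}\rangle$ constant on $\Lambda_1$. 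These are linear congruences modulo $2^m$, and nothing in your argument rules out that $\pi(\vect{v}_0)$ lies in the span of the difference sets $\{\pi(\vect{v})-\pi(\vect{v}_0)\}\cup\{\pi(\vect{w})-\pi(\vect{w}_0)\}$, in which case every $\vect{f}$ satisfying the homogeneous conditions pairs to $0$ with $\pi(\vect{v}_0)$ and the inhomogeneous condition $\equiv 2^{m-1}$ is unsolvable. The fallback of ``discarding a single element to eliminate a degeneracy'' is likewise unsupported --- you would need to show that one deletion always suffices, and no mechanism for that is offered. ``Should produce such an $\vect{f}$'' is an assertion of exactly the statement to be proved.

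The paper closes this gap by using the full strength of Corollary~\ref{cor:pairs}: it normalizes the \emph{pair} $(\vect{u},\vect{v})$, not just $\vect{u}$, so that $\vect{v}=y\vect{e}_1+2^{m-1}\vect{e}_{g+2}+z\vect{e}_{2g+2}$. The condition $\langle\vect{v},\vect{w}\rangle\equiv 2^{m-1}$ then reads $2^{m-1}(y+w_1)-zw_{g+1}\equiv 2^{m-1}\pmod{2^m}$, i.e.\ $zw_{g+1}\equiv 2^{m-1}(w_1+y-1)$, for every $\vect{w}\in\Lambda\setminus\{\vect{u},\vect{v}\}$. Hence translating every projection uniformly by $z\vect{e}_{2g}$ changes each pairing by $z(w_{g+1}-x_{g+1})\equiv 2^{m-1}(w_1-x_1)$, which cancels your error term for \emph{all} pairs at once --- no parity split, no case analysis, at the cost of sacrificing the two elements $\vect{u},\vect{v}$ rather than one. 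In other words, the second element of the system, in normal form, hands you the correction vector whose existence you are trying to establish abstractly. To repair your proof, either adopt this normalization or give an actual existence argument for $\vect{f}$; as written, the mixed case is unproven.
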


Observe that Lemma~\ref{lem:primitive} applies when $m=1$: every nonzero element of $ \left( \Z / 2 \Z \right)^{ 2 g } $ is primitive.

\begin{proof}

Consider a symplectic system $\Lambda$ in $ \left( \Z / 2^{ m } \Z \right)^{ 2 g+2 } $ containing a primitive element $ \vect{ u } $, and first suppose that $ g = 0 $. 
Since $ \Lambda' = \{ \vect{ 0 } \} $ is trivially a symplectic system of genus $0$, it suffices to show that $ | \Lambda | \leq 2 + | \Lambda' | = 3 $. 
Since the symplectic group $ \Sp( 2 ; \Z / 2^m \Z ) = \SL( 2 ; \Z / 2^m \Z ) $ acts transitively on primitive elements, we may assume without loss of generality that $ \vect{ u } = \vect{ e }_{ 1 } $. 
Suppose for a contradiction that $ \Lambda $ contains at least three other elements $ \vect{ v } $, $ \vect{ w } $, and $ \vect{ x } $. Then
\[
v_{ 2 } \equiv \det{ \begin{pmatrix} 1 & v_{ 1 } \\ 0 & v_{ 2 } \end{pmatrix} } \equiv \langle \vect{ e }_{ 1 } , \vect{ v } \rangle \equiv \langle \vect{ u } , \vect{ v } \rangle \equiv 2^{ m - 1 } \pmod{ 2^{ m } } .
\]
For the same reasons, $ w_{ 2 } \equiv x_{ 2 } \equiv 2^{ m - 1 } \pmod{ 2^{ m } } $. Moreover,
\[
2^{ m - 1 } ( v_{ 1 } - w_{ 1 } ) \equiv \det{ \begin{pmatrix} v_{ 1 } & w_{ 1 } \\ 2^{ m - 1 } & 2^{ m - 1 } \end{pmatrix} } \equiv \langle \vect{ v } , \vect{ w } \rangle \equiv 2^{ m - 1 } \pmod{ 2^{ m - 1 } } ,
\]
and so $ v_{ 1 } - w_{ 1 } \equiv 1 \pmod{ 2 } $. For the same reasons, $ w_{ 1 } - x_{ 1 } \equiv x_{ 1 } - v_{ 1 } \equiv 1 \pmod{ 2 } $. However, this gives the contradiction
\[
0 \equiv ( v_{ 1 } - w_{ 1 } ) + ( w_{ 1 } - x_{ 1 } ) + ( x_{ 1 } - v_{ 1 } )\equiv 1 + 1 + 1 \equiv 1 \pmod{ 2 } .
\]
Now suppose that $ g $ is positive. 
If $\Lambda$ is a singleton then the conclusion is trivial, so we may assume that $ \Lambda $ contains elements $\vect{u}$ and $ \vect{ v } $. 
By Corollary~\ref{cor:pairs}, we may assume that $ \vect{ u } = \vect{ e }_{ 1 } $ and that $ \vect{ v } = y \vect{ e }_{ 1 } + 2^{ m - 1 } \vect{ e }_{ g + 2 }  + z \vect{ e }_{ 2g+2 } $ for some $ y $ and $ z $ in $ \Z / 2^{ m } \Z $. 
Let $ A \coloneqq \Lambda \setminus \{ \vect{ u } , \vect{ v } \} $. Then $ w_{ g + 2 } \equiv \langle \vect{ e }_{ 1 } , \vect{ w } \rangle \equiv \langle \vect{ u } , \vect{ w } \rangle \equiv 2^{ m - 1 } \pmod{ 2^{ m } } $ and
\[
2^{ m - 1 } ( y + w_{ 1 } ) - z w_{ g+1 } \equiv \langle y \vect{ e }_{ 1 } + 2^{ m - 1 } \vect{ e }_{ g + 2 } + z \vect{ e }_{ 2 g+2 } , \vect{ w } \rangle \equiv \langle \vect{ v } , \vect{ w } \rangle \equiv 2^{ m - 1 } \pmod{ 2^{ m } }
\]
for any element $ \vect{ w } $ in $ A $. 
For each such element $ \vect{ w } $ in $ A $, let $ \vect{ w }' \coloneqq \sum_{ i = 1 }^{ g } ( w_{ i + 1 } \vect{ e }_{ i } + w_{ g + i + 2 } \vect{ e }_{ g + i } ) + z \vect{ e }_{ 2 g } $. The above observations imply that, for any pair of elements $ \vect{ w } $ and $ \vect{ x } $ in $ A $,
\begin{align*}
\det{ \begin{pmatrix} w_{ g + 1 } & x_{ g + 1 } \\ w_{ 2 g + 2 } + z & x_{ 2 g + 2 } + z \end{pmatrix} } & \equiv w_{ g + 1 } z - x_{ g + 1 } z + \det{ \begin{pmatrix} w_{ g + 1 } & x_{ g + 1 } \\ w_{ 2 g + 2 } & x_{ 2 g + 2 } \end{pmatrix} } & \pmod{ 2^{ m } } \\
& \equiv 2^{ m - 1 } ( w_{ 1 } - x_{ 1 } ) + \det{ \begin{pmatrix} w_{ g + 1 } & x_{ g + 1 } \\ w_{ 2 g + 2 } & x_{ 2 g + 2 } \end{pmatrix} } & \vdots \qquad \\
& \equiv \det{ \begin{pmatrix} w_{ 1 } & x_{ 1 } \\ w_{ g + 2 } & x_{ g + 2 } \end{pmatrix} } + \det{ \begin{pmatrix} w_{ g + 1 } & x_{ g + 1 } \\ w_{ 2 g + 2 } & x_{ 2 g + 2 } \end{pmatrix} } & \pmod{ 2^{ m } }
\end{align*}
Therefore, $ \langle \vect{ w }' , \vect{ x }' \rangle \equiv \langle \vect{ w } , \vect{ x } \rangle \pmod{ 2^{ m } } $ for all such elements $ \vect{ w } $ and $ \vect{ x } $ in $ A $. 
In particular, this implies both that the map $ \vect{ w } \mapsto \vect{ w }' $ is injective, and that its image $ \Lambda' \coloneqq \{ \vect{ w }' : \vect{ w } \in A \} $ is a symplectic system in $ \left( \Z / 2^{ m } \Z \right)^{ 2 g } $ such that $ | \Lambda | = | \{ \vect{ u } , \vect{ v } \} | + | A | = 2 + | \Lambda' | $.
\end{proof}

\begin{lemma} \label{lem:non-primitive}

For every symplectic system $\Lambda $ in $ \left( \Z / 2^{ m+2 } \Z \right)^{ 2 g } $  containing no primitive elements, there is a symplectic system $\Lambda''$ in $ \left( \Z / 2^{ m } \Z \right)^{ 2g } $ such that $ | \Lambda | = | \Lambda'' | $.

\begin{proof}

Consider a symplectic system $\Lambda$ in $ \left( \Z / 2^{ m+2 } \Z \right)^{ 2 g } $ which contains no primitive elements. 
Since the units in $ \Z / 2^{ m + 2 } \Z $ are precisely the odd numbers, an element in $ \left( \Z / 2^{ m + 2 } \Z \right)^{ 2 g } $ is primitive if and only if it is not divisible by $ 2 $.

In particular, our hypothesis is that every element in $ \Lambda $ is divisible by $ 2 $. For each element $ \vect{ u } $ in $ \Lambda $, choose $ \vect{ u }' $ such that $ \vect{ u } = 2 \vect{ u }' $. 
Let $ \Lambda' \coloneqq \{ \vect{ u }' : \vect{ u } \in \Lambda \} $, so that $ | \Lambda | = | \Lambda' | $, and consider the quotient map $ q $ of $ \left( \Z / 2^{ m + 2 } \Z \right)^{ 2 g } $ onto $ \left( \Z / 2^{ m } \Z \right)^{ 2 g } $. Note that $ 4 \langle \vect{ u }' , \vect{ v }' \rangle \equiv \langle \vect{ u } , \vect{ v } \rangle \equiv 2^{ m + 1 } \pmod{ 2^{ m + 2 } } $ for any pair $ \vect{ u } , \vect{ v } \in \Lambda $, so that $ \langle q ( \vect{ u }' ), q ( \vect{ v }' ) \rangle \equiv \langle \vect{ u }' , \vect{ v }' \rangle \equiv 2^{ m - 1 } \pmod{ 2^{ m } } $ for any $ \vect{ u }' , \vect{ v }' \in \Lambda' $. 
This implies that $ \Lambda'' \coloneqq q ( \Lambda' ) $ is a symplectic system in $ \left( \Z / 2^{ m } \Z \right)^{ 2 g } $ such that $ | \Lambda | = | \Lambda' | = | \Lambda'' | $.

\end{proof}

\end{lemma}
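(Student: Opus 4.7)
The plan is to halve every vector in $\Lambda$ (using the non-primitivity hypothesis to license this) and then reduce modulo $2^m$, extracting a symplectic system of lower power.

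First, I would observe that the units in $\Z/2^{m+2}\Z$ are precisely the odd residues, so a vector in $(\Z/2^{m+2}\Z)^{2g}$ is primitive iff at least one coordinate is odd. The assumption that $\Lambda$ contains no primitive elements therefore forces every $\vect{u}\in\Lambda$ to have all even coordinates, so each $\vect{u}$ admits a (non-unique) expression $\vect{u}=2\vect{u}'$ with $\vect{u}'\in(\Z/2^{m+2}\Z)^{2g}$. Fix one such choice for each $\vect{u}$ and let $\Lambda' \coloneqq \{\vect{u}' : \vect{u}\in\Lambda\}$. Bilinearity of the symplectic form gives $4\langle\vect{u}',\vect{v}'\rangle = \langle 2\vect{u}',2\vect{v}'\rangle = \langle\vect{u},\vect{v}\rangle \equiv 2^{m+1}\pmod{2^{m+2}}$. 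Lifting to $\Z$ and solving $4a\equiv 2^{m+1}\pmod{2^{m+2}}$ yields $a\equiv 2^{m-1}\pmod{2^m}$, so $\langle\vect{u}',\vect{v}'\rangle\equiv 2^{m-1}\pmod{2^m}$.

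Next, I would let $q:(\Z/2^{m+2}\Z)^{2g}\to(\Z/2^m\Z)^{2g}$ be the canonical reduction and set $\Lambda''\coloneqq q(\Lambda')$. The symplectic form is natural under $q$ in the sense that $\langle q(\vect{u}'),q(\vect{v}')\rangle$ coincides with the reduction of $\langle \vect{u}',\vect{v}'\rangle$ modulo $2^m$. Combining this with the congruence from the previous step gives $\langle q(\vect{u}'),q(\vect{v}')\rangle\equiv 2^{m-1}\pmod{2^m}$ for every pair coming from distinct elements of $\Lambda$, so $\Lambda''$ is automatically a symplectic system of genus $g$ and power $m$.

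The remaining step, which I expect to be the main obstacle, is verifying $|\Lambda|=|\Lambda''|$. The composition $\vect{u}\mapsto q(\vect{u}')$ is far from injective on the ambient group (dividing by $2$ and then reducing loses a factor of $4$), so one cannot hope to control injectivity from group-theoretic considerations alone. Fortunately, the symplectic-system condition supplies the missing piece self-referentially: if $q(\vect{u}')=q(\vect{v}')$ held for two distinct $\vect{u},\vect{v}\in\Lambda$, then $\langle q(\vect{u}'),q(\vect{v}')\rangle$ would be zero in $\Z/2^m\Z$, contradicting the just-established congruence to $2^{m-1}$, which is nonzero for $m\ge 1$. Hence $q$ restricts injectively to $\Lambda'$, and $|\Lambda|=|\Lambda'|=|\Lambda''|$ as required.
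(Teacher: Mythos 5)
Your proof is correct and follows essentially the same route as the paper's: halve each element, reduce modulo $2^m$, and verify the pairing congruence $4\langle\vect{u}',\vect{v}'\rangle\equiv 2^{m+1}\pmod{2^{m+2}}$. The only difference is that you spell out the injectivity of $q$ on $\Lambda'$ via the nonvanishing of the pairing, a point the paper leaves implicit in asserting $|\Lambda'|=|\Lambda''|$.
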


Equipped with the above lemmas, we are now ready to prove Proposition~\ref{prop:vectors}.

\begin{proof}[Proof (of Proposition~\ref{prop:vectors})]

We induct first on the genus $ g $ and then on the power $ m $.

The case $g=0$ is trivial, since a subset of $ \left( \Z / 2^{ m } \Z \right)^{ 0 } = \{ \vect{ 0 } \} $ has size at most one. 
Now suppose that the desired bound holds for all symplectic systems of genus $ g $ (we refer to this as the `outer' inductive hypothesis below). 
In order to show that it holds for symplectic systems of genus $ g + 1 $, we proceed by induction on the power $ m $, where the increment of induction is $ 2 $.

Every nonzero vector in $(\Z/2\Z)^{2g}$ is primitive, so the base case $m=1$ follows from the inductive hypothesis for $g$ via Lemma~\ref{lem:primitive}.
In case $ m = 2 $, consider a symplectic system in $ \left( \Z / 4 \Z \right)^{ 2 g+ 2 } $. 
If $ \Lambda $ contains a primitive element, then Lemma~\ref{lem:primitive} gives a symplectic system $ \Lambda' $ in $ \left( \Z / 4 \Z \right)^{ 2 g } $ such that $ | \Lambda|  \leq 2 + | \Lambda' | $. 
In this case, $ | \Lambda | \leq 2 g + 3 $ by the outer inductive hypothesis. 
Conversely, note that if $ \Lambda $ contains two non-primitive elements $ 2 \vect{ u } $ and $ 2 \vect{ v } $, then
\[
0 \equiv 0 \cdot \langle \vect{ u } , \vect{ v } \rangle \equiv 4 \cdot \langle \vect{ u } , \vect{ v } \rangle \equiv \langle 2 \vect{ u } , 2 \vect{ v } \rangle \equiv 2 \pmod{ 4 } .
\]
Therefore, if $ \Lambda $ does not contain any primitive elements, then $ | \Lambda | \leq 1 < 2 g + 3 $.

Now suppose that the desired bound holds for all symplectic systems in $ \left( \Z / 2^{ m } \Z \right)^{ 2 g+2 } $ and consider a symplectic system $ \Lambda $ in $ \left( \Z / 2^{ m+2 } \Z \right)^{ 2 g+2 } $. 
As above, if $ \Lambda $ contains a primitive element, then Lemma~\ref{lem:primitive} gives a symplectic system $ \Lambda' $ in $ \left( \Z / 2^{ m+2 } \Z \right)^{ 2 g } $ such that $ | \Lambda | \leq 2 + | \Lambda' | $. 
In this case, $ | \Lambda | \leq 2 g + 3 $ by the outer inductive hypothesis. Else, if $ \Lambda $ contains no primitive vectors, then Lemma~\ref{lem:non-primitive} gives a symplectic system $ \Lambda'' $ in $ \left( \Z / 2^{ m } \Z \right)^{ 2 g+2 } $ such that $ | \Lambda | = | \Lambda'' |$. 
In this case, $ | \Lambda | \leq 2 g + 3 $ by the inner inductive hypothesis.
\end{proof}

By \cite[Thm.~1.4]{MRT14} there are primitive symplectic systems in $(\Z/2\Z)^{2g}$ of size $2g+1$, so Proposition~\ref{prop:vectors} is optimal when $m=1$.
However, in low genus, and for $m>1$, the proof of Lemma~\ref{lem:primitive} can yield slightly more. 

\begin{lemma}
\label{lem:low genus}
A primitive symplectic system $\Lambda\subset  \left( \Z / 2^{ m } \Z \right)^{ 2 g } $ with $m>1$ and $g\le 2$ has $|\Lambda|\le 2g$.
\end{lemma}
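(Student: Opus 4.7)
For $g = 1$, I would apply Corollary~\ref{cor:pairs} to set some $\vect u \in \Lambda$ equal to $\vect e_1$. The constraint $\langle \vect e_1, \vect v \rangle = v_2 \equiv 2^{m-1} \pmod{2^m}$ forces $v_2$ even (since $m > 1$), and primitivity of $\vect v$ then forces $v_1$ odd. For distinct $\vect v, \vect w \in \Lambda \setminus \{\vect u\}$, the pairing $\langle \vect v, \vect w \rangle = 2^{m-1}(v_1 - w_1) \equiv 2^{m-1} \pmod{2^m}$ requires $v_1 - w_1$ odd, contradicting that $v_1, w_1$ are both odd. Hence $|\Lambda| \le 2$.

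For $g = 2$, I would induct on $m \ge 2$. In the common setup, $\bar\Lambda \subset (\Z/2\Z)^4$ consists of nonzero vectors pairwise orthogonal under the $\Z/2\Z$-symplectic form, so lies in an isotropic subspace of dimension at most $2$. Using that $\Sp(4, \Z/2^m\Z)$ surjects onto $\Sp(4, \Z/2\Z)$ (which acts transitively on Lagrangians), I may assume $\bar\Lambda \subset \langle \bar{\vect e}_1, \bar{\vect e}_2 \rangle$, so $u_3, u_4$ are even for every $\vect u \in \Lambda$. For the inductive step $m \ge 3$, set $\vect u' \coloneqq (u_1, u_2, u_3/2, u_4/2) \pmod{2^{m-1}} \in (\Z/2^{m-1}\Z)^4$. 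A direct computation gives $\langle \vect u', \vect v' \rangle \equiv 2^{m-2} \pmod{2^{m-1}}$, so $\{\vect u'\}$ is a primitive symplectic system of power $m - 1$. The crucial step is injectivity of $\vect u \mapsto \vect u'$ on $\Lambda$: if $\vect u' = \vect v'$ for distinct $\vect u, \vect v \in \Lambda$, then $\vect u - \vect v = 2^{m-1}(\epsilon_1, \epsilon_2, 0, 0)$ for some nonzero $(\epsilon_1, \epsilon_2)$, and bilinearity yields $\langle \vect u, \vect v \rangle \equiv 0 \pmod{2^m}$, contradicting $\equiv 2^{m-1}$. The inductive hypothesis at $m - 1 \ge 2$ then gives $|\Lambda| = |\{\vect u'\}| \le 4$.

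The main obstacle is the base case $m = 2$, since the reduction above only produces a system in $(\Z/2\Z)^4$ of power $1$, for which Proposition~\ref{prop:vectors} gives only the weak bound $2g + 1 = 5$. To handle $m = 2$ directly, I would partition $\Lambda \subset (\Z/4\Z)^4$ (after the Lagrangian reduction, so $u_3, u_4 \in \{0, 2\}$) into the three parity classes $A, B, C$ determined by whether $(u_1, u_2) \pmod 2$ equals $(1, 0), (0, 1)$, or $(1, 1)$. Within-class pairings force $|A|, |B|, |C| \le 2$, and cross-class pairings rule out each distribution summing to at least $5$ --- namely $(2, 2, 1), (2, 1, 2), (1, 2, 2)$, and $(2, 2, 2)$. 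The arithmetic is elementary but case-heavy, and is where most of the bookkeeping for the proof lives.
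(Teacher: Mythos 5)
Your argument is correct, and for $g=2$ it takes a genuinely different route from the paper's. The $g=1$ case is the same computation the paper gives (normalize $\vect u=\vect e_1$, note $v_2\equiv 2^{m-1}$ is even so primitivity forces $v_1$ odd, and two odd first coordinates cannot differ by an odd number). For $g=2$, the paper normalizes a pair $\vect u=\vect e_1$, $\vect v=y\vect e_1+2^{m-1}\vect e_3+z\vect e_4$ via Corollary~\ref{cor:pairs}, projects the remaining elements to $(\Z/2^m\Z)^2$, and bounds two mutually orthogonal subsystems $\mathcal E$ and $\mathcal O$ (split by the parity of the first coordinate) by a case analysis carried out at the fixed power $m$. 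You instead observe that the mod-$2$ reduction of $\Lambda$ is totally isotropic, move it into the Lagrangian $\langle\bar{\vect e}_1,\bar{\vect e}_2\rangle$ using the surjection $\Sp(4;\Z/2^m\Z)\to\Sp(4;\Z/2\Z)$ and transitivity on Lagrangians (standard facts, but inputs the paper does not need), and then induct on $m$ by halving the third and fourth coordinates; your injectivity argument for $\vect u\mapsto\vect u'$ and the drop in power are both right, and primitivity of $\vect u'$ follows since $u_3,u_4$ are even while $\vect u$ is primitive (you should say this explicitly). This pushes all the combinatorics into the base case $m=2$, and your claims there do check out: within each parity class the pairing reduces to a difference of the (even) dual coordinates, giving $|A|,|B|,|C|\le 2$, and for instance $|A|=|B|=2$ forces the $\vect e_4$-coordinates of $A$ and the $\vect e_3$-coordinates of $B$ to be constant, after which the two constraints a $C$-element must satisfy are incompatible; the cases $(2,1,2)$ and $(1,2,2)$ are handled similarly and $(2,2,2)$ contains $(2,2,1)$. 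The trade-off: the paper's proof stays at one power $m$ and its $\mathcal E$/$\mathcal O$ structure is exactly what Remark~\ref{rem:even odd} uses to explain why the bound fails at $g=3$, while your proof isolates the casework at $m=2$ and makes the role of mod-$2$ isotropy transparent at the cost of invoking reduction to $\Sp(4;\Z/2\Z)$.
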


\begin{proof}
Let $g=1$.
Keeping the notation from the proof of Lemma~\ref{lem:primitive}, if $\vect{u},\vect{v},\vect{w}$ were a primitive symplectic system, then after applying an element of $\Sp(2;\Z/2^m\Z)$ we would have $v_2\equiv w_2\equiv 2^{m-1}$ while $v_1$ and $w_1$ are of opposite parity. 
As $m>1$, this implies that one of $\vect{v}$ or $\vect{w}$ is non-primitive.

For the case $g=2$, as in the proof of Lemma~\ref{lem:primitive} we assume that $\vect{u},\vect{v}\in \Lambda$ satisfy $ \vect{ u } = \vect{ e }_{ 1 } $ and $ \vect{ v } = y \vect{ e }_{ 1 } + 2^{ m - 1 } \vect{ e }_{ 3 }  + z \vect{ e }_{ 4 } $, for some $ y $ and $ z $ in $ \Z / 2^{ m } \Z $. The remaining elements may be split into two sets $\Lambda\setminus \{\vect{u},\vect{v}\} = \Lambda_e \cup \Lambda_o$, where
\[
\Lambda_e = \{ \vect{w}\in\Lambda \setminus \{ \vect{u},\vect{v} \}:w_1 \text{ is even } \} , \text{ and } \Lambda_o = \{ \vect{w} \in \Lambda  \setminus \{ \vect{u},\vect{v} \} : w_1 \text{ is odd } \}~.
\]
Consider the projection $\pi:( \Z / 2^{ m } \Z )^4 \to  (\Z / 2^{ m } \Z )^2$ sending $\vect e_1$ and $\vect e_3$ to $\vect 0$, and let $\mathcal{E} =\pi( \Lambda_e)$, $\mathcal{O}=\pi(\Lambda_o)$, and $\vect{z} = \pi(\vect{v})$.
The desired conclusion will follow from $|\mathcal{E}|+|\mathcal{O}|\le 2$.

First we observe that $\mathcal{E}$ and $\mathcal{O}$ are symplectic systems, and hence $|\mathcal{E}|=|\Lambda_e|$ and $|\mathcal{O}|=|\Lambda_o|$. 
Indeed, if $\vect{w},\vect{w}'\in\Lambda_e$, then $\langle \vect{w},\vect{w}' \rangle \equiv w_2w_4'-w_4w_2' \equiv \langle \pi(\vect{w}),\pi(\vect{w}')\rangle$, and similarly for $\Lambda_o$. 
Moreover, $\mathcal{E}\perp \mathcal{O}$: 
if $\vect{w}\in \Lambda_e$ and $\vect{w}'\in \Lambda_o$, then $\langle \vect{w},\vect{w}'\rangle \equiv 2^{m-1} + w_2w_4'-w_4w_2' \equiv 2^{m-1}+ \langle \pi( \vect{w}),\pi(\vect{w}')\rangle $.

Note that because $\Lambda$ is primitive the elements of $\mathcal{E}$ are all primitive. 
Again noting $m>1$, we find that $|\mathcal E| \le 2$.
Therefore, if $|\mathcal O|=0$ we are done.

We make a simple observation about $(\Z/2^m\Z)^2$: 
if $\mathcal{A}$ is a symplectic system with $\vect{p}\perp \mathcal{A}$ for a primitive element $\vect{p}$, then $|\mathcal{A}|\le 1$. Indeed, expanding $\vect{p}$ to a symplectic basis, one finds immediately that $\mathcal{A}$ is a symplectic system that must consist of elements parallel to $\vect{p}$, and so cannot contain more than one element.
Thus, if $|\mathcal{E}|=1$ then $|\mathcal{O}|\le 1$, and again we are done. 

Suppose that $|\mathcal{E}|=0$. If $y$ is even then $\vect{z} \perp \mathcal{O}$. Moreover, as $\vect{v}$ is primitive and $m>1$, $z$ must be odd, and so $|\mathcal{O}|\le 1$. If $y$ is odd then $\mathcal{O}\cup \{\vect{z}\}$ is a symplectic system in $(\Z/2^m\Z)^2$, and hence $|\mathcal{O}|\le 2$. 

Finally, suppose that $\mathcal E = \{\vect{v}_1,\vect{v}_2\}$ and $\vect{v}_3\in \mathcal{O}$. 
If $y$ is even, $\vect{z}$ is primitive and $\langle \vect{z},\vect{v}_i \rangle \equiv2^{m-1}$ for $i=1,2$, and so $\mathcal{E}\cup\{ \vect{z}\}$ forms a primitive symplectic system of size three in $(\Z/2^m\Z)^2$, an impossibility.
If $y$ is odd, then $\langle \vect{z},\vect{v}_1\rangle \equiv \langle \vect{z},\vect{v}_2 \rangle \equiv 0$ while $\langle \vect{z},\vect{v}_3\rangle \equiv 2^{m-1}$. On the other hand, $\mathcal{E}$ is primitive, so $\vect{v}_1$ and $\vect{v}_2$ form a basis for $(\Z/2^m\Z)^2$. Writing $\vect{v}_3$ in terms of $\vect{v}_1$ and $\vect{v}_2$, this implies that $ \langle \vect{z},\vect{v}_3 \rangle \equiv 0$, a contradiction.
\end{proof}

\section{A short proof of the upper bound \texorpdfstring{$2g+1$ \'a la \cite{MRT14}}{2g+1 a la MRT14}}
\label{sec:simple proof}

By lifting to $\Z$, Proposition~\ref{prop:vectors} can be rephrased as follows:

\begin{proposition}
\label{prop:Josh}
Let $\Lambda \subset  \Z ^{2g}$ satisfy $\langle \vect{v},\vect{w} \rangle \equiv2^{m-1} \mod 2^m$ for all distinct $\vect{v},\vect{w} \in\Lambda$. Then $|\Lambda|\le 2g+1$.
\end{proposition}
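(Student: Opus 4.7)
The plan is to run a Gram matrix argument that directly generalizes the mod~$2$ calculation of \cite[Prop.~5.1]{MRT14}. Suppose for contradiction that $\Lambda$ contains $n = 2g+2$ elements $\vect{v}_0,\ldots,\vect{v}_{2g+1}$, and form the $n \times n$ integer Gram matrix $G$ with entries $G_{ij} = \langle \vect{v}_i, \vect{v}_j \rangle$.

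On one hand, if $V$ denotes the $2g \times n$ matrix whose columns are the $\vect{v}_i$ and $\Omega$ the matrix of the standard symplectic form on $\Z^{2g}$, then $G = V^{\top} \Omega V$, whence $\mathrm{rank}_{\mathbb{Q}}(G) \le \mathrm{rank}(V) \le 2g$.

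On the other hand, since the symplectic form is alternating the diagonal of $G$ vanishes, while by hypothesis each off-diagonal entry of $G$ is $2^{m-1}$ times an odd integer. Thus $G = 2^{m-1} H$ for some integer matrix $H$ with zero diagonal and odd off-diagonal entries, and the reduction $H \bmod 2$ equals $J - I$ over $\Z/2\Z$, where $J$ is the all-ones matrix. A quick calculation gives $(J-I)^2 = (n-2)J + I$, and since $n = 2g+2$ is even this reduces to $I$ modulo $2$; so $J-I$ is invertible over $\Z/2\Z$ and $H \bmod 2$ has full rank $n$. Because the $\mathbb{Q}$-rank of an integer matrix dominates the rank of any reduction modulo a prime (exhibit a full-rank minor with invertible determinant mod $2$), we conclude $\mathrm{rank}_{\mathbb{Q}}(H) \ge 2g+2$, and since $G$ is a nonzero scalar multiple of $H$, also $\mathrm{rank}_{\mathbb{Q}}(G) \ge 2g+2$, contradicting the first bound.

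I do not anticipate a substantive obstacle: the whole force of the proof is the observation that the Gram matrix over $\mathbb{Q}$ cannot have rank exceeding the dimension of the ambient symplectic space, paired with the fact that dividing by $2^{m-1}$ produces exactly the same ``$J-I$ with $n$ even'' picture that already appears in the $m=1$ case of \cite{MRT14}. The only step worth a careful sentence is verifying that $G/2^{m-1}$ is an integer matrix with odd off-diagonal entries, which is immediate from the hypothesis $\langle \vect{v}_i,\vect{v}_j\rangle \equiv 2^{m-1} \pmod{2^m}$ combined with the alternating property on the diagonal.
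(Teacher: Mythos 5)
Your argument is correct, and every step checks out: the Gram matrix $G = V^{\top}\Omega V$ has $\mathbb{Q}$-rank at most $2g$; the hypothesis together with the alternating property gives $G = 2^{m-1}H$ with $H \equiv J - I \pmod 2$; the identity $(J-I)^2 = (n-2)J + I$ makes $J-I$ invertible over $\Z/2\Z$ for even $n$; and an invertible reduction mod $2$ forces $\det H$ to be odd, hence $\mathrm{rank}_{\mathbb{Q}}(G) = \mathrm{rank}_{\mathbb{Q}}(H) = 2g+2$, a contradiction. This is, however, a genuinely different route from the one the paper takes. The paper's proof never forms the Gram matrix: it analyzes an arbitrary integral linear dependence $a_1\vect{v}_1 + \cdots + a_n\vect{v}_n = 0$, pairs it against $\vect{v}_i$ and $\vect{v}_j$ to show all coefficients must share a parity, divides by the gcd to force them all to be odd (hence nonzero), and then derives a contradiction from the fact that any $2g+1$ vectors in $\Z^{2g}$ admit a dependence which, viewed as a dependence on a set of $2g+2$ elements, has a vanishing coefficient. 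Your approach is the more literal generalization of the mod-$2$ rank computation in \cite[Prop.~5.1]{MRT14}, and it is arguably cleaner as a self-contained computation; the paper's version buys a slightly stronger structural fact along the way, namely that every nontrivial dependence among elements of such a $\Lambda$ has full support with all coefficients odd after dividing by their gcd. Either argument suffices for the stated bound.
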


\begin{proof}
We make the preliminary observation that if $\{\vect{v}_1,\ldots,\vect{v}_n\} \subset \Lambda$ satisfies the linear dependence
\begin{equation}
\label{lin dep}
a_1\vect{v}_1 + \ldots + a_n\vect{v}_n =0~,
\end{equation}
then we must have either that $\{a_i\} \subset 2\Z$ or $\{a_i\}\subset \Z\setminus2\Z$: if $a_i$ were even and $a_j$ were odd, then
\begin{align*}
0 &= \langle \vect{v}_i, a_1\vect{v}_1 + \ldots + a_n \vect{v}_n \rangle \equiv \sum_{\ell \ne i}\  2^{m-1} a_\ell  \mod 2^m~, \text{ and}\\
0 &= \langle \vect{v}_j, a_1 \vect{v}_1 + \ldots + a_n\vect{v}_n \rangle \equiv \sum_{\ell\ne i,j} 2^{m-1} a_\ell \mod 2^m~.
\end{align*}
Together these imply that $2^{m-1}a_j \equiv 0 \mod 2^m$, contradicting the assumption that $a_j$ is odd. 
If $a_i\ne 0$ for some $a_i$ in \eqref{lin dep}, then we may divide \eqref{lin dep} by $\gcd\{a_i:a_i\ne 0\}$, and in the resulting dependence we would find an odd coefficient.
Consequently, any nontrivial linear dependence for a symplectic system, as in \eqref{lin dep}, must have only nonzero coefficients.
This easily implies the desired bound: if $|\Lambda| \ge 2g+2$, then any subset $\{\vect{v}_1,\ldots,\vect{v}_{2g+1}\}$ would determine a linear dependence for $\Lambda$ with some coefficient equal to zero.
\end{proof}

\begin{remark}
We are grateful to Josh Greene for his help with this short proof.
\end{remark}

\bigskip
\section{Algebraic Systems of Curves} \label{sec:curves}

By Proposition~\ref{prop:vectors}, any subset $\Lambda \subset \Z^{2g}$ with $ \langle \vect{v}, \vect{w} \rangle \equiv 2^{ m - 1 } \pmod{ 2^{ m } } $ for every $\vect{v},\vect{w}\in\Lambda$ has $|\Lambda| \le 2 g + 1 $.
If, moreover, $m>1$, $g\le 2$, and $\Lambda$ consists of primitive elements, then by Lemma~\ref{lem:low genus} we have $|\Lambda|\le 2g$.
Therefore to prove Theorem~\ref{thm:curves}, it remains to construct symplectic $k$-systems in $\Z^{ 2 g } $ of the appropriate sizes.
As for Corollary~\ref{cor:systems}, any collection of primitive $\Z$-homology classes can be lifted to oriented simple curves on $S$ whose pairwise algebraic intersection numbers are given by the symplectic pairing of the classes \cite[Prop.~6.2]{FM12}. 
Therefore Corollary~\ref{cor:systems} follows from Theorem~\ref{thm:curves}. 

We will now describe two different constructions of large symplectic systems $\Lambda$. 

\begin{construction}
\label{constr:first}
For the first, we will construct a large algebraic $k$-system of simple curves $\Delta_0$. Choosing orientations arbitrarily for the curves in $\Delta_0$, the induced homology classes form a symplectic $k$-system $\Lambda$ with $|\Lambda|=|\Delta_0|$: any pair of distinct curves in $\Delta_0$ algebraically intersect $\pm k$ times (in fact, in our construction they will intersect geometrically $k$ times), and so determine $\Z$-homology classes with symplectic pairing $\pm k$. When $k$ is odd (i.e.~$m=1$) we find one more curve to add to $\Delta_0$; when $m>1$ this curve is not primitive, and we will have to work slightly harder to locate a primitive algebraic $k$-system (see Construction~\ref{constr:second} below). 
Evidently, a primitive algebraic $k$-system of size $2g+1$ can exist only when $m=1$ or $g\ge3$ by Lemma~\ref{lem:low genus}.

\begin{figure}
\begin{tikzpicture}[ scale = 1.75 ]
\input{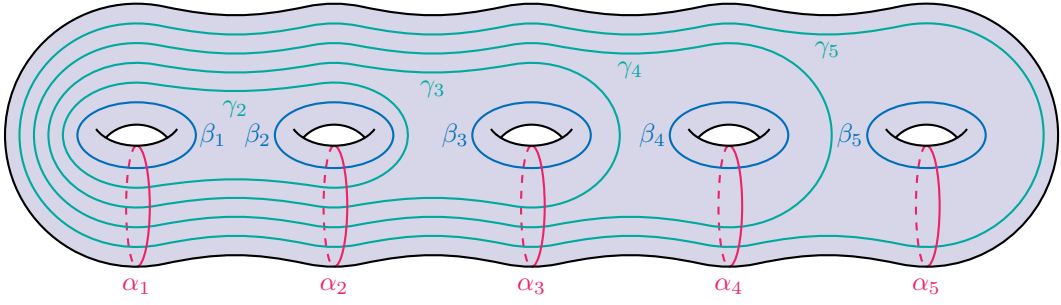}
\end{tikzpicture}
\caption{Building an algebraic $k$-system on a surface of genus $g$.}
\label{pic:curves}
\end{figure}
Let $ \tau_{ \beta }^{ k }( \alpha ) $ be the $ k $-fold Dehn twist of a curve $ \alpha $ around the simple curve $ \beta $.
Let $ \alpha_{ 1 } , \dotsc , \alpha_{ g } $, $ \beta_{ 1 } , \dotsc , \beta_{ g } $, and $ \gamma_{ 2 } , \dotsc , \gamma_{ g } $ be the curves drawn on $ S $ as in Figure~\ref{pic:curves} when $g=5$. 
Let $ \delta_{ 1 } \coloneqq \alpha_{ 1 } $. 
For each index  $ 2 \leq i \leq g $, let $ \delta_{ i } \coloneqq \tau_{ \gamma_{ i } }^{ k }( \alpha_{ i } ) $, and for each index $1\leq i \leq g$, let $\epsilon_i \coloneqq \tau_{\beta_i}^k(\delta_i)$.
Finally let $ \Delta_0 \coloneqq \{ \delta_{ 1 } , \dotsc , \delta_{ g } , \epsilon_{ 1 } , \dotsc , \epsilon_{ g } \} $, and let $\Lambda_0$ be the set of $\Z$-homology classes of (chosen orientations for) the curves in $\Delta_0$.

Now we claim that elements in $\Lambda_0$ pairwise have symplectic pairing $\pm k$. Indeed, up to the ambiguity of orientation,
$\pm[\delta_i] = [\alpha_i] + k ( [\beta_1]+\ldots+[\beta_i])$ and $\pm[\epsilon_i] = [\delta_i] + k[\beta_i]$, and it is straightforward to compute that $|\langle [\epsilon_i],[\delta_j]\rangle| = k$ for all $1\le i,j\le g$, and $|\langle [\epsilon_i] , [\epsilon_j] \rangle| = | \langle [\delta_i],[\delta_j]\rangle| =k$ for all $1\le i< j \le g$.

When $k=2n+1$ is odd (in particular, when $m=1$), we let $\epsilon_0\coloneqq \tau_{\beta_1}^n \circ \tau_{\alpha_1}^2(\beta_1) $ and $\Delta = \Delta_0 \cup \{\epsilon_0\}$. 
Note that the $\Z$-homology class of $\epsilon_0$ is given by
\[
\vect{v}_0\coloneqq 2[\alpha_1] + k[\beta_1]~,
\]
and the elements of $\Lambda = \Lambda_0 \cup \{ \vect{v}_0 \}$ pairwise have symplectic pairing $\pm k$: indeed, we have $|\langle 2[\alpha_1]+k[\beta_1],[\delta_i]\rangle|=|\langle 2[\alpha_1]+k[\beta_1],[\epsilon_i]\rangle| =k$ for all $1\le i\le g$.
Of course, when $m>1$ the last element $\vect{v}_0$ is non-primitive.
\end{construction}

\begin{construction}
\label{constr:second}
For the second construction, we will build primitive symplectic $k$-systems in $\Z^{2g}$ of size $2g+1$ when $g\ge 3$ (and with no restriction on $m$). 
Roughly speaking, this amounts to a careful choice for $g=3$, followed by a `direct summing'-like operation to build such systems in higher genus. 

Consider the set of seven $\Z$-homology classes $\{\vect{v}_0,\ldots,\vect{v}_6\}$, where $\vect{v}_0=[\alpha_1]$ and
\begin{align*}
\vect{v}_1 & = k[\beta_1] + [\beta_3] \\
\vect{v}_2 & = k[\beta_1] +[\alpha_2]+ k[\beta_2]+ k[\alpha_3]  \\
\vect{v}_3 & = k[\beta_1] +[\alpha_2]+  k[\beta_2]+k[\alpha_3]  + [\beta_3] \\
\vect{v}_4 & = [\alpha_1]+k[\beta_1] + [\alpha_2] +  [\beta_3] \\
\vect{v}_5 & = [\alpha_1] +k[\beta_1]+ k[\beta_2] + [\beta_3] \\
\vect{v}_6 &= [\alpha_1]+k[\beta_1]+[\alpha_2] + k[\beta_2]+2[\beta_3]~.
\end{align*}
Evidently $\{\vect{v}_0,\ldots,\vect{v}_6\}$ is a primitive symplectic $k$-system in $\Z^{2g}$ that lies in the subspace $\langle [\alpha_i],[\beta_j]\rangle_{i,j=1}^3$.

Now, given a primitive symplectic $k$-system $\{\vect{w}_0,\ldots,\vect{w}_r\} \subset \langle [\alpha_i],[\beta_j]\rangle_{i,j=1}^r$ with $r<g$, we may let 
\begin{align*}
\vect{w}_r' &= \vect{w}_r + [\alpha_{r+1}] \\
\vect{w}_{r+1} &= \vect{w}_r + k[\beta_{r+1}] \\
\vect{w}_{r+2} &= \vect{w}_r + [\alpha_{r+1}] + k[\beta_{r+1}]~.
\end{align*}
It is evident that $\{\vect{w}_0,\ldots,\vect{w}_{r-1},\vect{w}_r',\vect{w}_{r+1},\vect{w}_{r+2}\} \subset \langle [\alpha_i],[\beta_j]\rangle_{i,j=1}^{r+1}$ is again a primitive symplectic $k$-system. In this way, $\{\vect{v}_0,\ldots,\vect{v}_6\}$ gives rise to a primitive symplectic $k$-system of size $2g+1$ for any $g\ge 3$.
\end{construction}

\begin{remark}
\label{rem:even odd}
In order to observe why the upper bound of $2g$ for primitive symplectic $k$-systems in $\Z^{2g}$ fails when $g= 3$, the reader should compare Construction~\ref{constr:second} with the proof of Lemma~\ref{lem:low genus}. 
In the language of that proof, the primitive symplectic $k$-system $\{\vect v_0,\ldots,\vect v_6\}\subset\Z^6$ has $|\mathcal{E}|=2$ and $|\mathcal{O}|=3$; each is a primitive symplectic $k$-system in $\Z^4$ of size at most four, and $\mathcal{E}\perp \mathcal{O}$, but $|\mathcal{E}|+|\mathcal{O}|>4$.
\end{remark}

\begin{remark}
\label{rem:realization}
We are unable to decide whether Construction~\ref{constr:second} lifts to a system of simple curves whose pairwise geometric intersection numbers are equal to $k$, even for the case $k=2$ and $g=3$.
Initial attempts were only successful with lifting six of the curves, demonstrating a collection of curves that algebraically and geometrically intersect twice of size $2g$. 
Thus we leave as a challenge for the reader: on a closed surface of genus three, do there exist seven simple curves pairwise algebraically and geometrically intersecting twice?
\end{remark}

\bigskip
\section{Many curves intersecting exactly twice}
\label{sec:many curves}
Finally, we note that the assumption in Corollary~\ref{cor:systems} that curves \emph{algebraically} pair to $\pm k$ is quite essential. By contrast, the following construction demonstrates that the maximum size of a collection of curves that pairwise \emph{geometrically} intersect twice is at least quadratic in the genus.

\begin{construction}
\label{constr:third}
Consider a regular Euclidean $g$-gon $P$ with vertices labelled counterclockwise $v_1,\ldots,v_g$ and edges $e_1,\ldots,e_g$, where $e_i$ is incident to $v_{i-1}$ and $v_i$. We may double $P$ along its boundary to obtain a sphere with $g$ marked points, consisting of two $g$-gons, $P$ and $\overline{P}$. 
When small disks centered at the marked points are cut out and one-holed tori are glued in, we obtain a closed surface of genus $g$.

We now consider a family of curves determined by a sequence of four edges $(e_{i_1},e_{i_2},e_{i_3},e_{i_4})$, for distinct indices $i_1,i_2,i_3,i_4$.
The choice of a pair of edges in $P$ (resp.~$\overline{P}$) determines a simple arc in $P$ (resp.~$\overline{P}$) that connects the midpoints of the two edges. The union of the $(e_{i_1},e_{i_2})$- and $(e_{i_3},e_{i_4})$-arcs in $P$ and the $(e_{i_2},e_{i_3})$- and the $(e_{i_4},e_{i_1})$-arcs in $\overline{P}$ determines a closed curve, which we denote $\beta(i_1,i_2,i_3,i_4)$. 
Note that $\beta(i_1,i_2,i_3,i_4)$ is simple precisely when $(i_1,i_2,i_3,i_4)$ is consistent with the cyclic order $(1,\ldots,g)$.
(There is ambiguity in that $\beta(i_1,i_2,i_3,i_4)=\beta(i_3,i_4,i_1,i_2)$, but this won't affect what follows.)

Now choose $k\ge 1$. We indicate $\alpha(i,j)\coloneqq\beta(1,i,j,j+k-i+1)$.  By construction, $\alpha(i,j)$ partitions the vertices into $\left([1,i-1]\cup[j,j+k-i]\right) $ and $ \left( [i,j-1]\cup[j+k-i+1,g]\right)$, two sets with $k$ and $g-k$ vertices, respectively. If $\alpha(i,j) $ and $\alpha(i',j')$ were freely homotopic, they would determine the same partition, and hence $i=i'$ and $j=j'$.
Moreover, note that $\alpha(i,j)$ is simple provided that $1<i<j<j+k-i+1\le g$, i.e.~when $2\le i \le k$ and $i+1\le j \le i+g-k-1$. 
See Figure~\ref{pic:polygons} for some examples.

\begin{figure}
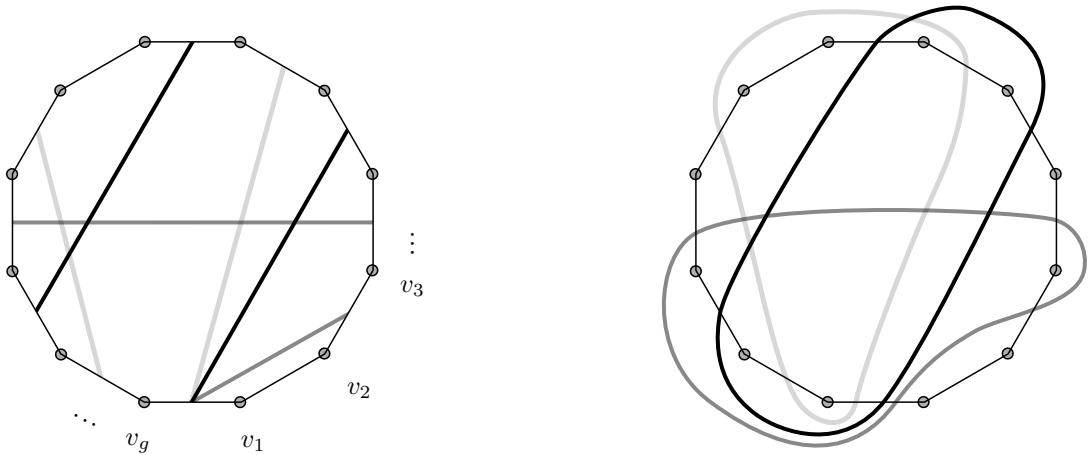

\begin{minipage}{.45\textwidth}
\centering
\begin{lpic}{polygon1(6cm)}
	\lbl[]{310,5;$v_1$}
	\lbl[]{430,65;$v_2$}
	\lbl[]{490,180;$v_3$}
	\lbl[]{490,220;$\cdot$}
	\lbl[]{490,230;$\cdot$}
	\lbl[]{490,240;$\cdot$}
	\lbl[]{180,5;$v_g$}
	\lbl[]{110,35;$\cdot$}
	\lbl[]{120,30;$\cdot$}
	\lbl[]{130,25;$\cdot$}
\end{lpic}
\end{minipage}\hfill
\begin{minipage}{.45\textwidth}
\centering
\begin{lpic}{polygon2(6cm)}
\end{lpic}
\end{minipage}
\caption{Construction~\ref{constr:third} in the case that $g=12$ and $k=8$. Representatives for $\alpha(3,4)$, $\alpha(5,7)$, and $\alpha(6,9)$ are shown on the right in minimal position.}
\label{pic:polygons}
\end{figure}

Any pair of curves $\beta(i_1,i_2,i_3,i_4)$ and $\beta(j_1,j_2,j_3,j_4)$ intersect at most four times. Moreover, if $i_1=j_1$, then examination reveals that they intersect fewer than four times. Since these curves are contained in a planar subsurface, they must intersect an even number of times. We conclude that $\alpha(i_1,j_1)$ and $\alpha(i_2,j_2)$ are either disjoint or intersect twice. On the other hand, in this planar subsurface both $\alpha(i_1,j_1)$ and $\alpha(i_2,j_2)$ bound a disk containing $k$ marked vertices, one of which is $v_1$. If $\alpha(i_1,j_1)$ and $\alpha(i_2,j_2)$ did admit disjoint representatives, by the classification of surfaces we would find that they cobound an annulus, and hence $(i_1,j_1)=(i_2,j_2)$.  
Therefore, we find that
\[
\mathcal{A}_k \coloneqq \left\{ \alpha(i,j) : 2\le i \le k \text{ and } i+1\le j \le i+g-k-1 \right\}
\]
is a collection of curves that pairwise geometrically intersect exactly twice.
Note that
\[
|\mathcal{A}_k| = (k-1)(g-k-1) = (k-1)g - k^2 + 1~.
\]
If we let $k=\lfloor g/2 \rfloor$ we find:
\end{construction}

\begin{proposition}
\label{prop:many curves}
There is a curve collection $\Delta$ on the closed oriented surface of genus $g$ so that each pair of curves geometrically intersects exactly twice, and so that $|\Delta| >\textstyle \frac14 g^2 -g$.
\end{proposition}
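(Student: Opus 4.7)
The plan is to apply Construction~\ref{constr:third} with the choice $k = \lfloor g/2 \rfloor$, which maximizes the integer value of $(k-1)(g-k-1)$. Since the construction already produces a collection $\mathcal{A}_k$ of simple closed curves on the closed oriented surface of genus $g$ whose pairs geometrically intersect exactly twice, with $|\mathcal{A}_k| = (k-1)(g-k-1)$, the entire task reduces to the arithmetic of plugging in this value of $k$.

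Setting $\Delta \coloneqq \mathcal{A}_{\lfloor g/2 \rfloor}$, I would split by the parity of $g$. When $g$ is even, $k = g/2$ and $|\Delta| = (g/2 - 1)^2 = \tfrac{1}{4} g^2 - g + 1$. When $g$ is odd, $k = (g-1)/2$ and $|\Delta| = \tfrac{1}{4}(g-3)(g-1) = \tfrac{1}{4} g^2 - g + \tfrac{3}{4}$. Either way, $|\Delta| > \tfrac{1}{4} g^2 - g$, as required. For very small $g$ where $\lfloor g/2 \rfloor < 2$ and the construction degenerates, the bound $\tfrac{1}{4}g^2 - g$ is negative, so a single curve suffices.

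The only genuine substance lies in Construction~\ref{constr:third} itself: verifying that the curves $\alpha(i,j)$ are simple, pairwise non-isotopic, and pairwise intersect in exactly two points. The subtlest ingredient there is ruling out disjoint pairs, which uses the observation that each $\alpha(i,j)$ bounds a disk in the planar subsurface (the doubled polygon minus the handles) containing exactly $k$ of the marked vertices, one being $v_1$, so that disjoint representatives would cobound an annulus and hence force the induced partitions of marked points to coincide. If I were building the argument from scratch rather than invoking the construction, this disjointness obstruction, together with the parity argument forcing intersections to be even and the combinatorial bound of four from the planar arc structure, would be the main technical point; the counting is then immediate.
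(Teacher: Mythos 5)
Your proposal is correct and matches the paper's own argument exactly: the paper likewise takes $\Delta = \mathcal{A}_{\lfloor g/2\rfloor}$ from Construction~\ref{constr:third} and reads off the bound from $|\mathcal{A}_k| = (k-1)(g-k-1)$, with all the substantive work (simplicity, pairwise non-isotopy, the parity and annulus arguments forcing exactly two intersections) residing in the construction itself. Your parity-of-$g$ arithmetic is right, and the remark about degenerate small $g$ is a harmless extra precaution.
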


Of course, the curve collection above has pairwise algebraic intersection numbers equal to zero, so it is somewhat `orthogonal' to Corollary~\ref{cor:systems}. The size of any curve system with pairwise intersections at most two is known to be $O(g^3 \log \,g)$ by \cite[Thm.~3]{Gre18b}, so the following remains salient:

\begin{problem*}
\label{q:exact 2-system}
Determine the growth rate of the maximum size of a collection of curves with pairwise geometric intersection numbers equal to two and pairwise algebraic intersection numbers equal to zero. Is it closer to $g^2$ or $g^3\log\, g$?
What about $k>2$?
\end{problem*}

\bigskip

\bibliographystyle{alpha}

\vspace{1cm}

\end{document}